\newtheorem{theorem}{Theorem}    % Standard theorem environment
\newtheorem{proposition}{Proposition} 
\newtheorem{question}{Question} 
\newtheorem{corollary}{Corollary}
\theoremstyle{definition}
\newtheorem{definition}{Definition}
\newtheorem{example}{Example}
\newtheorem*{remark*}{Remark}
\newcommand{\Z}{\mathbb{Z}}
\newcommand{\R}{\mathbb{R}}
\newcommand{\Tor}{\mathrm{Tor}}
\newcommand{\scl}{\mathrm{scl}}
\newcommand{\Ker}{\mathrm{Ker}\,}
\title[Group whose generalized torsions are torsion]{On a group whose generalized torsion elements are torsion elements}
\author[T.Ito]{Tetsuya Ito}
\address{Department of Mathematics, Kyoto University, Kyoto 606-8502, JAPAN}
\email{tetitoh@math.kyoto-u.ac.jp}
\subjclass[2020]{}
\keywords{}
\begin{document}

\begin{abstract}
We show that a group whose generalized torsion elements are torsion elements (which we call a $TR^{*}$-group) is a torsion-by-$R^{*}$ group, an extension of a torsion group by a group without generalized torsion elements. We also discuss a generalized torsion group, a group all of whose non-trivial elements are generalized torsion elements.
\end{abstract}

\maketitle

\section{Introduction}

A non-trivial element $g$ of a group $G$ is a \emph{generalized torsion element} if there exists a positive integer $n$ and $x_1,\ldots,x_n \in G$ such that they satisfy
\begin{equation*}
\label{eqn:gt}g^{x_1} g^{x_2}\cdots g^{x_n} = 1.
\end{equation*} 
Here $g^{x}:=xgx^{-1}$.

A group $G$ is an \emph{$R^{*}$-group} (or, $\Gamma$-torsion-free group) if it has no generalized torsion elements. Such a group has been studied due to a close connection to (bi-)orderable groups (see \cite[Section 4]{MR}, for examples). 

Unlike usual torsion elements, a generalized torsion element is much more complicated. It is often hard to check whether a given element is a generalized torsion element or not. For example, only recently it is proven that the free product is an $R^{*}$-group if and only if all the factors are $R^{*}$-groups \cite{IMT}.

When one studies $R^{*}$-groups, it is usually assumed that a group is torsion-free, because a torsion element is obviously a generalized torsion element.
However, when we study generalized torsion elements (such as, the classification of all generalized torsion elements of a group $G$) it is more natural to allow a group with torsion.

As a natural generalization of $R^{*}$-groups, we define the following class of groups. We say that a generalized torsion element $g$ is \emph{genuine} if it is not a torsion element.

\begin{definition}[$TR^{*}$-group]
A group $G$ is an \emph{$TR^{*}$-group} if $G$ has no genuine generalized torsion element. That is, every generalized torsion element of $G$ is a torsion element.
\end{definition}

Since the condition of torsion or generalized torsion are local, a group $G$ is an $R^{*}$-group (resp. $TR^{*}$-group) if and only if it is locally so (i.e. every finitely generated subgroup of $G$ is an $R^{*}$-group (resp. $TR^{*}$-group)). 

An $R^{*}$-group is understood as a torsion-free $TR^{*}$-group. As an opposite extreme, a \emph{torsion group}, a group all of whose elements are torsion elements, is obviously a $TR^{*}$-group. We show that, $TR^{*}$ groups are extensions of these two extreme cases.

\begin{theorem}
\label{theorem:RT}
A group $G$ is a $TR^{*}$-group if and only if it is a torsion-by-$R^{*}$ group; there exists a normal subgroup $K$ which is a torsion group such that $G\slash K$ is an $R^{*}$-group. 
\end{theorem}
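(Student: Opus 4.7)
The plan is to handle both directions separately; the harder one is to construct, inside a $TR^{*}$-group, the normal torsion subgroup whose quotient is $R^{*}$.

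For the easy direction, suppose $K$ is a normal torsion subgroup of $G$ with $G/K$ an $R^{*}$-group. Any generalized torsion relation $g^{x_1}\cdots g^{x_n}=1$ projects to one in $G/K$, and since $G/K$ is $R^{*}$ this forces $\overline{g}=\overline{1}$. Hence $g\in K$ is torsion, and $G$ is $TR^{*}$.

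For the harder direction, I take $K$ to be the set of torsion elements of $G$; the task is to prove that this $K$ is a subgroup and that $G/K$ is $R^{*}$. Normality (closure under conjugation) and closure under inversion are automatic from the definition of order. The crucial step, and where the $TR^{*}$ hypothesis has to be used in a genuinely nontrivial way, is closure under multiplication: in a general group the torsion elements need not form a subgroup (for example in $\Z/2 * \Z/2$, which accordingly fails to be $TR^{*}$). The idea is the telescoping identity
\[
\prod_{k=0}^{N-1}(gh)^{h^{-k}}=\prod_{k=0}^{N-1}h^{-k}gh^{k+1}=g^{N}h^{N},
\]
valid for any $g,h\in G$ and any $N\geq 1$. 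When $g^{m}=h^{n}=1$, choosing $N=\mathrm{lcm}(m,n)$ makes the right-hand side equal to $1$, so $gh$, if nontrivial, is a generalized torsion element, and the $TR^{*}$ hypothesis upgrades this to $gh$ being torsion, as required.

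With $K$ established as a normal torsion subgroup, the remaining claim that $G/K$ is $R^{*}$ is a short lifting argument: a generalized torsion relation $\overline{g}^{\overline{x}_{1}}\cdots\overline{g}^{\overline{x}_{n}}=\overline{1}$ in $G/K$ means $g^{x_1}\cdots g^{x_n}\in K$, hence is torsion of some order $\ell$. Then $(g^{x_1}\cdots g^{x_n})^{\ell}=1$ is a product of $n\ell$ conjugates of $g$ equal to $1$, exhibiting $g$ as a generalized torsion element of $G$. The $TR^{*}$ hypothesis forces $g$ to be torsion, so $g\in K$ and $\overline{g}=\overline{1}$, contradicting the assumption that $\overline{g}$ was a nontrivial generalized torsion element.
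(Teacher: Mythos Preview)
Your proof is correct and follows essentially the same route as the paper: both directions match, and your key step---that a product of two torsion elements is generalized torsion, hence torsion in a $TR^{*}$-group, so $\Tor(G)$ is a normal subgroup---is exactly the paper's Proposition~\ref{prop:prod-torsion}/Corollary~\ref{cor:tor}, proved via the same telescoping-by-powers-of-$h$ idea. Your choice of $N=\mathrm{lcm}(m,n)$ makes the telescoped product equal to $1$ directly, whereas the paper telescopes to a power of $b$ and then implicitly raises to a further power; this is a cosmetic improvement, not a different argument.
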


In \cite{BSS} it is shown that FC-groups (a group in which every element has finitely many conjugates) and nilpotent groups are $TR^{*}$-groups.
These can be deduced from Theorem \ref{theorem:RT}; for such groups $\Tor(G)$, the set of torsion elements of $G$, is a normal subgroup and $G\slash \Tor(G)$ is an $R^{*}$-group so $G$ is a $TR^{*}$-group.

More generally, we show that an extension of torsion groups has the following nice property concerning genuine generalized torsion elements.

\begin{theorem}
\label{theorem:T-by-G}
Let $G$ be an extension\footnote{In this paper we adopt the convention that \emph{$G$ is an extension of $K$ by $Q$} if there is an exact sequence $1 \rightarrow K \rightarrow G \rightarrow Q \rightarrow 1$. } of a torsion group $K$ by a group $Q$; there is a surjection $p:G\rightarrow Q$ whose kernel is $K$.
Then $g \in G$ is a genuine generalized torsion element if and only if $p(g) \in Q$ is a genuine generalized torsion element.
\end{theorem}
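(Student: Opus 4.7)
My plan is to prove the equivalence by pushing a generalized torsion relation downward through $p$ for one direction, and lifting a relation from $Q$ to $G$ (up to an error in $K$) for the other, exploiting that torsion in the kernel can be absorbed by raising to an appropriate power.

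For the forward direction, suppose $g^{x_1}\cdots g^{x_n} = 1$ in $G$ with $g$ not torsion. Applying $p$ gives $p(g)^{p(x_1)}\cdots p(g)^{p(x_n)} = 1$ in $Q$. To conclude that $p(g)$ is a genuine generalized torsion element I must verify that $p(g) \neq 1$ and that $p(g)$ has infinite order. If $p(g) = 1$ then $g \in K$, so $g$ is torsion, contradicting genuineness. If instead $p(g)$ had finite order $m$, then $g^m \in K$, and since $K$ is a torsion group some further power $(g^m)^k$ is $1$, again forcing $g$ to be torsion. Hence $p(g)$ is a genuine generalized torsion element.

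For the backward direction, suppose $p(g)$ is a genuine generalized torsion element with relation $p(g)^{y_1}\cdots p(g)^{y_n} = 1$. Choose lifts $x_i \in G$ with $p(x_i) = y_i$, and set
\[
h \;=\; g^{x_1} g^{x_2} \cdots g^{x_n}.
\]
Then $p(h) = 1$, so $h \in K$. Because $K$ is a torsion group, $h^N = 1$ for some $N \geq 1$. Expanding,
\[
(g^{x_1} g^{x_2} \cdots g^{x_n})^N \;=\; 1,
\]
which is literally a product of $Nn$ conjugates of $g$ equal to the identity. Moreover $g$ itself is not torsion: if $g^k = 1$ then $p(g)^k = 1$, contradicting that $p(g)$ is a genuine generalized torsion element. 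In particular $g \neq 1$, so the displayed relation certifies that $g$ is a generalized torsion element, and hence a genuine one.

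The only step requiring any care is the backward direction, and even there the obstacle is merely the bookkeeping observation that the obstruction to lifting the $Q$-relation to $G$ lies in $K$, where it is automatically of finite order; repeating the lifted word kills this error and produces an honest generalized torsion relation in $G$. No finer information about the extension or the conjugators is needed.
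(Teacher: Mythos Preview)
Your proof is correct and follows essentially the same argument as the paper's: push the relation through $p$ and rule out $p(g)$ being trivial or torsion for the forward direction; lift a relation from $Q$, land in $K$, and use that $K$ is torsion to kill the error for the backward direction. The only cosmetic difference is that you spell out the pushed-forward relation explicitly, whereas the paper leaves that step implicit.
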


We also prove the following mild generalization of Theorem \ref{theorem:T-by-G}.

\begin{theorem}
\label{theorem:GT-group}
Let $G$ be an extension of a group $K$ by a group $Q$, so there is a surjection $p:G\rightarrow Q$ whose kernel is $K$. 
If all the non-trivial element of $K$ is a generalized torsion elements of $G$, then  $g \in G$ is a generalized torsion element if and only if $p(g) \in Q$ is a genuine generalized torsion element of $Q$ or $p(g)=1$.
\end{theorem}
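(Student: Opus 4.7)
The plan is to prove the two implications separately, and the only real tool needed is the defining relation of a generalized torsion element together with the fact that the surjection $p$ and conjugation both behave well with respect to products.

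For the forward direction $(\Rightarrow)$, suppose $g$ is a generalized torsion element of $G$, so $g^{x_1} g^{x_2}\cdots g^{x_n} = 1$ for some $x_i \in G$. Apply $p$: the image relation $p(g)^{p(x_1)}\cdots p(g)^{p(x_n)} = 1$ holds in $Q$, which directly says $p(g) = 1$ or $p(g)$ is a generalized torsion element of $Q$. This half uses nothing about $K$.

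The substantive direction is $(\Leftarrow)$. If $p(g)=1$, then $g \in K$, and (assuming $g \neq 1$) the hypothesis on $K$ furnishes the required generalized torsion identity for $g$ in $G$. If instead $p(g)$ is a generalized torsion element of $Q$, pick a relation $p(g)^{y_1}\cdots p(g)^{y_n}=1$ in $Q$, lift each $y_i$ to some $x_i \in G$, and set $h := g^{x_1}g^{x_2}\cdots g^{x_n}$. By construction $p(h)=1$, so $h \in K$. If $h=1$ we are done. Otherwise $h$ is a non-trivial element of $K$, so by hypothesis there exist $z_1,\ldots,z_m \in G$ with $h^{z_1}\cdots h^{z_m}=1$. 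I would then expand each $h^{z_j}$, using the identity $(a_1\cdots a_n)^z = a_1^z \cdots a_n^z$, to obtain
\[
h^{z_j} = g^{z_j x_1} g^{z_j x_2} \cdots g^{z_j x_n},
\]
and conclude that $h^{z_1}\cdots h^{z_m}=1$ rewrites as a product of $nm$ conjugates of $g$ equal to $1$, exhibiting $g$ itself as a generalized torsion element of $G$ (noting that $p(g) \ne 1$ forces $g \ne 1$).

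The main obstacle I anticipate is purely bookkeeping in the $(\Leftarrow)$ step when the lifted word $h$ is non-trivial: one must invoke the $K$-hypothesis on the right element and then carefully track how conjugation distributes through the product to make sure the resulting identity is genuinely a product of conjugates of $g$, not of $h$. The argument is a two-level lift, and matching it up with the fact that the statement separates the case $p(g)=1$ from the case that $p(g)$ is a (genuine) generalized torsion element in $Q$ is where a careful reader wants each bracket in place.
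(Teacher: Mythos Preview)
Your argument is correct and matches the paper's proof essentially line for line: lift the $Q$-relation to an element $h\in K$, invoke the hypothesis on $K$ to get a second relation, and expand the nested conjugates into a product of conjugates of $g$. You are in fact slightly more careful than the paper---you handle the case $h=1$ separately, and your conjugation order $g^{z_j x_i}$ is the correct one for the convention $g^{x}:=xgx^{-1}$ (the paper writes $g^{x_i z_j}$, a harmless slip since all that is needed is that each factor be a conjugate of $g$).
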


Thus in a study of genuine generalized torsion elements it is sufficient to assume that the group has no torsion normal subgroups. Similarly, when we study a generalized torsion element of a group $G$, we can assume that $G$ has no normal subgroup that consists of generalized torsion elements.

This motivates us to study the following natural generalization of torsion groups.
\footnote{We remark, however, that the normal subgroup $K$ in Theorem \ref{theorem:GT-group} is not necessarily a generalized torsion group, because we require an element $k \in K \subset G$ to be a generalized torsion element of $G$, but we do not require $k$ to be a generalized torsion element of $K$.}.

\begin{definition}[Generalized torsion group]
A group $G$ is a \emph{generalized torsion group} if every non-trivial element of $G$ is a generalized torsion element.
\end{definition}

We observe properties of generalized torsion group in Section \ref{sec:gt-group}. Among them, we show that a finitely generated generalized torsion group is freely indecomposable except one particular example.

\begin{theorem}
\label{theorem:gt}
A finitely generated generalized torsion group $G$ is decomposable with respect to free product if and only if $G = D_{\infty}=\Z_2 \ast \Z_2$ is an infinite dihedral group. 
\end{theorem}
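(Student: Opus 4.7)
The plan is to verify $D_\infty$ directly for the easy direction, and for the converse to exploit the Bass--Serre tree of a hypothetical nontrivial free-product splitting $G=A\ast B$ to force both factors to be $\Z_2$.

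For the forward direction: in $D_\infty = \langle a, b \mid a^2, b^2\rangle$, the computation $a(ab)a^{-1} = a^2 b a^{-1} = ba = (ab)^{-1}$ shows that conjugation by $a$ inverts $ab$, and hence $(ab)^k \cdot ((ab)^k)^a = (ab)^k(ab)^{-k}=1$ for every nonzero integer $k$. Since every nontrivial element of $D_\infty$ is either torsion (and so trivially a generalized torsion element) or conjugate to some $(ab)^k$ with $k\neq 0$, every nontrivial element is a generalized torsion element.

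For the converse, suppose $G = A\ast B$ is a finitely generated generalized torsion group with both factors nontrivial. Grushko's theorem makes $A,B$ finitely generated. The retraction $\pi_A: G\to A$ killing $B$ sends any generalized torsion relation $a^{x_1}\cdots a^{x_n}=1$ for $a\in A$ in $G$ to a corresponding relation in $A$, so $A$ (and symmetrically $B$) is itself a generalized torsion group; in particular $G^{\mathrm{ab}} = A^{\mathrm{ab}} \oplus B^{\mathrm{ab}}$ is torsion. Now consider the Bass--Serre tree $T$ of the splitting $G=A\ast B$: the valences at $A$- and $B$-vertices are $|A|$ and $|B|$, so $T$ is a bi-infinite line precisely when $|A|=|B|=2$, equivalently when $G=D_\infty$. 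The remaining task is therefore to exclude the case that $T$ branches.

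The heart of the proof is to exhibit an element $g\in G$ that cannot be a generalized torsion element whenever $T$ branches. The naive candidate $g=ab$ does not suffice: already in $\Z_2\ast\Z_3$, the element $(ab)(ab)^a = ab^2 a$ has order $3$, giving the length-$6$ relation $[(ab)(ab)^a]^3=1$ and showing $ab$ itself is a generalized torsion element. A more refined candidate is needed, such as a high power $(ab)^k$ or a carefully constructed iterated commutator whose axis uses the branching of $T$ in an essential way. The obstruction may be formulated either directly, via a tree-geometric combinatorial constraint on the cancellations possible in $\prod_i g^{x_i}=1$ when the axes of the various $g^{x_i}$ cover a branching region of $T$, or via bounded cohomology: branching of $T$ supplies a nontrivial homogeneous quasi-morphism $\hat\rho$ on $G$, and any generalized torsion element $g$ of relation length $n$ satisfies $|\hat\rho(g)|\leq (1-1/n)D(\hat\rho)$ from the defect inequality, which can then be amplified (by passage to powers of $g$ or to longer relations derived from the given one) to contradict the existence of a suitable test $g$ with large $\hat\rho$-value. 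The principal obstacle is precisely identifying the right test element $g$ and the corresponding sharp obstruction, given that the more obvious candidates such as $ab$ can themselves be generalized torsion elements.
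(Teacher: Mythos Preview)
Your forward direction and the opening of the converse (Grushko, the retraction argument showing $A$ and $B$ are themselves generalized torsion groups, the reduction to ruling out branching of the Bass--Serre tree) are all fine. But the proof is not complete: your final paragraph is an explicit acknowledgment that you have not identified the test element or the obstruction, and neither the tree-combinatorial route nor the quasi-morphism route is actually carried out. Noting that $ab$ fails in $\Z_2\ast\Z_3$ is a good observation, but it leaves the main step entirely open.

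The paper fills exactly this gap via stable commutator length. Your defect inequality $|\hat\rho(g)|\le(1-\tfrac1n)D(\hat\rho)$, amplified over powers as you suggest, is (via Bavard duality) precisely the statement that $\scl_G\equiv 0$ on a generalized torsion group; this is the paper's Proposition~\ref{prop:scl=0}. What you are missing is the matching \emph{lower} bound that produces the contradiction. The paper invokes Chen's spectral gap for free products, $\scl_{A\ast B}(g)\ge\tfrac12-\tfrac1N$ with $N$ the minimal order among the letters of $g$, which together with $\scl\equiv 0$ forces every nontrivial element of $A$ and of $B$ to have order $2$; finite generation then yields $A\cong\Z_2^{a}$, $B\cong\Z_2^{b}$. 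A second citation, to the Chen--Heuer computation of $\scl$ in right-angled Coxeter groups, eliminates all cases except $a=b=1$. So your bounded-cohomology instinct is exactly right, but the argument is finished not by hand-building a clever $g$ and $\hat\rho$ but by appealing to existing $\scl$ spectral-gap theorems that guarantee such pairs exist.
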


\section*{Acknowledgement}
The author is partially supported by JSPS KAKENHI Grant Numbers 19K03490, 21H04428.
He would like to thank R. Bastos, S. Schneider, and D. Silveira for their stimulating works and discussions, and to R. Coulon and Y. Antol\'in for sharing their insight and answers to the Questions in the first version.

\section{Characterization of $TR^{*}$-group}

Our characterization of $TR^{*}$-group is based on the following rather simple observation. 

\begin{proposition}
\label{prop:prod-torsion}
Let $a$ and $b$ be torsion elements of a group $G$. Then $ab$ is a generalized torsion element.
\end{proposition}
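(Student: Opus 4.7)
The plan is to exhibit an explicit product of conjugates of $ab$ that equals the identity. Set $m = \mathrm{ord}(a)$ and $n = \mathrm{ord}(b)$, and let $k$ be any common multiple of $m$ and $n$ (for instance $k = mn$). The natural family of conjugates to try is conjugation by powers of $b$, since this pushes the $b$-factor in $ab$ around without disturbing $a$ too badly.

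Concretely, I would compute $(ab)^{b^{-i}} = b^{-i}(ab)b^{i} = b^{-i} a b^{i+1}$. When two consecutive such terms are multiplied, the trailing $b^{i+1}$ of one meets the leading $b^{-(i+1)}$ of the next and cancels, so the $a$'s become adjacent. A one-line induction then shows the telescoping identity
\begin{equation*}
\prod_{i=0}^{j}(ab)^{b^{-i}} \;=\; a^{j+1} b^{j+1}
\end{equation*}
for every $j \ge 0$. Taking $j = k-1$ yields
\begin{equation*}
\prod_{i=0}^{k-1}(ab)^{b^{-i}} \;=\; a^{k} b^{k} \;=\; 1,
\end{equation*}
which exhibits $ab$ as a generalized torsion element, provided $ab \neq 1$ (in the degenerate case $b = a^{-1}$ there is nothing to prove, since generalized torsion elements are by definition non-trivial).

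There is no real obstacle: once one guesses conjugating by the powers $b^{-i}$ (motivated by wanting the $b$-factors to telescope while the $a$-factors pile up), the computation is essentially automatic. The only small decision is choosing $k$ to be a common multiple of the two orders so that both $a^{k}$ and $b^{k}$ vanish simultaneously.
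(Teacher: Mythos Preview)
Your proof is correct and follows essentially the same approach as the paper: conjugate $ab$ by descending powers of $b$ and let the $b$-factors telescope so that the $a$-factors accumulate. The paper stops after $m$ terms, obtaining $a^{m}b^{n-m}=b^{n-m}$ (a torsion element, so a further power finishes), whereas you go directly to a common multiple $k$ and land on $a^{k}b^{k}=1$ --- a slightly cleaner endpoint for the same computation.
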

\begin{proof}
Let $a^{m}=b^{n}=1$ for $m,n>0$. Then
\[ (ab)(b^{n-1}(ab)b^{1-n})(b^{n-2}(ab)b^{2-n}) \cdots (b^{n-(m-1)}(ab) b^{(n-1)-m}) = a^{m}b^{(n-m)} = b^{n-m}\]
so $ab$ is a generalized torsion element.
\end{proof}

Thus when $G$ has no torsion normal subgroups, a product of torsion elements provides a lot of genuine generalized torsion elements. Thus a structure of genuine generalized torsion elements is more complicated.

Beside a construction of generalized torsion elements from torsion elements, Proposition \ref{prop:prod-torsion} implies the following important property.

\begin{corollary}
\label{cor:tor}
If $G$ is a $TR^{*}$-group then $\Tor(G)$, the set of all torsion elements of $G$, is a normal subgroup of $G$.
\end{corollary}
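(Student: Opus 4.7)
The plan is entirely direct: verify the three subgroup axioms for $\Tor(G)$ and then observe that conjugation preserves order. Two of the three axioms, together with normality, come for free from the definition of torsion, so the only substantive point is closure under multiplication, and that is exactly what Proposition \ref{prop:prod-torsion} is designed to deliver.

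In detail, first I would note that $1\in\Tor(G)$, and that $\Tor(G)$ is closed under inversion since $a^{n}=1$ implies $(a^{-1})^{n}=1$. For closure under multiplication, take $a,b\in\Tor(G)$. If $ab=1$ there is nothing to do. Otherwise $ab$ is a non-trivial element, and Proposition \ref{prop:prod-torsion} exhibits it as a generalized torsion element of $G$. Because $G$ is a $TR^{*}$-group by hypothesis, every generalized torsion element is already a torsion element, so $ab\in\Tor(G)$. This shows $\Tor(G)$ is a subgroup.

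For normality, if $a\in\Tor(G)$ with $a^{n}=1$ and $g\in G$, then $(gag^{-1})^{n}=ga^{n}g^{-1}=1$, so $gag^{-1}\in\Tor(G)$. Hence $\Tor(G)\trianglelefteq G$.

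There is no real obstacle here; the content of the corollary is concentrated entirely in Proposition \ref{prop:prod-torsion}, and the only small subtlety worth flagging is the convention that generalized torsion elements are required to be non-trivial, which is why one must separate the case $ab=1$ before invoking the proposition.
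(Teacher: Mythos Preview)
Your proof is correct and matches the paper's approach: the paper does not write out a proof of the corollary at all, simply remarking that Proposition~\ref{prop:prod-torsion} implies it, and your argument is exactly the routine verification one would expect---using the proposition for closure under products (together with the $TR^{*}$ hypothesis) and handling the identity, inverses, and normality trivially.
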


Using this property of $TR^{*}$-groups, we prove our main theorems.

\begin{proof}[Proof of Theorem \ref{theorem:RT}]
First we show that if a group $G$ is a torsion-by-$R^{*}$ group, then $G$ is a $TR^{*}$-group. Let $1 \rightarrow K \rightarrow G \stackrel{p}{\rightarrow} Q \rightarrow 1$ where $K$ is a torsion group and $Q$ is an $R^{*}$-group.
Let $g \in G$ be a generalized torsion element. If $p(g)\neq 1$, then $p(g) \in Q$ is a generalized torsion element of $Q$. This is impossible since $Q$ is an $R^{*}$-group. Thus $p(g)=1$, namely, $g \in \Ker p =K$. Since $K$ is a torsion group, $g$ is a torsion element.

Conversely, let $G$ be a $TR^{*}$-group.
By Corollary \ref{cor:tor}, $\Tor(G)$ is a normal subgroup of $G$. We show that the quotient group $G \slash \Tor(G)$ is an $R^{*}$-group.
Assume to the contrary that there exists a generalized torsion element $q \in G\slash \Tor(G)$.
Thus there exists $y_1,\ldots,y_k \in G\slash \Tor(G)$ such that 
$q^{y_1}q^{y_2}\cdots q^{y_k}=1$. Take $g, x_1,\ldots,x_n \in G$ so that $p(g)=q$, $p(x_i)=y_i$. Then $p(g^{x_1}g^{x_2}\cdots g^{x_n})=q^{y_1}q^{y_2}\cdots q^{y_n}=1$ so $g^{x_1}g^{x_2}\cdots g^{x_n} \in \Tor(G)$. 
Thus $(g^{x_1}g^{x_2}\cdots g^{x_{n}})^{m}=1$ for some $m \geq 1$, which means that $g$ is a generalized torsion element of $G$. Since $G$ is a $TR^{*}$-group, $g$ is a torsion element. However, this means that $g \in \Tor(G)$, which is a contradiction.
\end{proof}

\begin{proof}[Proof of Theorem \ref{theorem:T-by-G}]
Let $g \in G$ be a genuine generalized torsion element.
Then $p(g) \neq 1$ because otherwise $g \in \Ker p = K$ is a torsion element.
Thus $p(g)$ is a generalized torsion element of $Q=G\slash K$. 
$p(g)$ cannot be a torsion element because otherwise $p(g)^{m}=p(g^{m})=1$ for some $m>1$ so $g^{m} \in \Ker p = K$ is a torsion element (hence $g$ is a torsion).

Conversely, assume that $p(g)$ is a genuine generalized torsion element. Then $g$ is not a torsion element, and $p(g)^{y_1}p(g)^{y_2}\cdots p(g)^{y_n}=1$ for some $y_1,\ldots,y_n \in Q$. Take $ x_1,\ldots,x_n\in G$ so that $p(x_i)=y_i$. Then $p(g^{x_1}g^{x_2}\cdots g^{x_n})=1$ so $g^{x_1}g^{x_2}\cdots g^{x_n} \in K$. Thus  $(g^{x_1}g^{x_2}\cdots g^{x_n})^{m}=1$ for some $m \geq 1$ so $g$ is a genuine generalized torsion element.
\end{proof}

We remark that Theorem \ref{theorem:T-by-G} contains the following generalization of Proposition \ref{prop:prod-torsion} (which is not hard to see directly); If $G$ is an extension of a torsion group $K$ by a group $Q$ and $g \in G$ is a genuine generalized torsion element, for every $k \in K$, $kg$ is again a generalized torsion element.

The proof of Theorem \ref{theorem:GT-group} is quite similar to that of Theorem \ref{theorem:T-by-G}.

\begin{proof}[Proof of Theorem \ref{theorem:GT-group}]
`Only if' direction is obvious so we prove `if' direction. 
If $p(g)=1$ then $g \in K$ so $g$ is a generalized torsion element.
If $p(g) \neq 1 \in Q$ is a generalized torsion element of $Q$, then $p(g)^{y_1}p(g)^{y_2}\cdots p(g)^{y_n}=1$ for some $y_1,\ldots,y_n \in Q$. Take $ x_1,\ldots,x_n \in G$ so that $p(x_i)=y_i$. Then $p(g^{x_1}g^{x_2}\cdots g^{x_n})=1$ so $g^{x_1}g^{x_2}\cdots g^{x_n} \in K$. Let $k = (g^{x_1}g^{x_2}\cdots g^{x_n}) \in K$. By assumption, $k$ is a generalized torsion element of $G$ so $k^{z_1}k^{z_2}\cdots k^{z_{m}}=1$ for some $z_1,\ldots,z_m \in G$. Therefore
\begin{align*}
1& = k^{z_1}k^{z_2}\cdots k^{z_{m}} \\
&= (g^{x_1}g^{x_2}\cdots g^{x_n})^{z_1}(g^{x_1}g^{x_2}\cdots g^{x_n})^{z_2} \cdots (g^{x_1}g^{x_2}\cdots g^{x_n})^{z_m} \\
&= (g^{x_1z_1}g^{x_2z_1}\cdots g^{x_nz_1})(g^{x_1z_2}g^{x_2z_2}\cdots g^{x_nz_2})\cdots (g^{x_1z_m}g^{x_2z_m}\cdots g^{x_nz_m})
\end{align*}
so $g$ is a generalized torsion element.
\end{proof}

As in Theorem \ref{theorem:T-by-G}, Theorem \ref{theorem:GT-group} says that if both $g \in G$ and $k \in K$ are generalized torsion elements of $G$, then their product $gk$ is again a generalized torsion element whenever $K$ is a normal subgroup consisting of generalized torsion elements \emph{of $G$}. 
%We remark that this does not necessarily imply that $K$ is a generalzed torsion group -- an element $k \in K$ may not be a generalized torsion elements of $K$.

Here we give one example where Theorem \ref{theorem:GT-group} is applied.

\begin{example}
\label{example:gt-normal}
Let $G$ be a torsion-free group and $K$ be an infinite cyclic normal subgroup generated by $k$. Assume that $K$ is not central. Thus there exists $g \in G$ such that $k^{g}=gkg^{-1}=k^{m}$ for some $m \neq 0,1$. If $m<0$, then $K$ satisfies the assumption of Theorem \ref{theorem:GT-group}; for every $n>0$, $k^{n}$ is a generalized torsion element because $(gkg^{-1})^{n} k^{|m|n} = 1$. 
\end{example}

Unfortunately, the group for which Theorem \ref{theorem:GT-group} can be effectively applied is limited. This is not surprising because it is rare that the products of generalized torsion elements, such as, powers of a generalized torsion element, become a generalized torsion element.

In an opposite direction, we show that a torsion-free hyperbolic group, one of the most famous and important classes of groups, never admits a normal subgroup consisting of generalized torsion elements.

The argument below uses $\scl_G$, the \emph{stable commutator length}. 
Since we only use various known results on stable commutator length, we do not explain details. The reader can simply view the stable commutator length as a map\footnote{Strictly speaking, $\scl_G$ is usually defined on $[G,G]$. We extend the domain of definition by defining $\scl_G(g)=\frac{\scl(g^k)}{k}$ if $g^{k} \in [G,G]$ for some $k>0$, and $\scl_G(g)=\infty$ otherwise.} $\scl_G:G \rightarrow \R_{\geq 0} \cup \{\infty\}$ that is conjugation-invariant (i.e. $\scl_G(ghg^{-1})=\scl_G(h)$ for all $g,h \in G$) and homogeneous (i.e. $\scl_G(g^{n}) = |n| \scl(g)$ for all $n \in \Z$).
See \cite{Ca} for basics of stable commutator length.

\begin{proposition}
Let $G$ be a word hyperbolic group. Then there exists a constant $N>0$ (that only depends on $G$) such that for every $1 \neq g \in G$ and $m\geq N$, $g^{m}$ is not a generalized torsion element, unless $g^{n}$ is conjugate to $g^{-n}$ for some $n>0$ (in which case, $g^{mn}$ is a generalized torsion element for all $m>0$). 
In particular, if $G$ is a torsion-free word hyperbolic group, then $G$ has no normal subgroup consisting of generalized torsion elements.
\end{proposition}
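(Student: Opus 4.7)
The plan is to control $\scl_G$ of generalized torsion elements and combine this with the Calegari--Fujiwara spectral gap theorem for hyperbolic groups. First I would establish a universal $\scl$-bound: if $h^{y_1}h^{y_2}\cdots h^{y_k}=1$ in an arbitrary group, then rewriting $h^{y_i}=[y_i,h]\cdot h$ and telescoping the product yields
\[
h^{-k}=[y_1,h]\cdot[y_2,h]^{h}\cdot[y_3,h]^{h^{2}}\cdots [y_k,h]^{h^{k-1}},
\]
so $h^{-k}$ is a product of $k$ commutators, hence $\scl_G(h)\le 1$. Applied to a generalized torsion element of the form $g^m$, this gives $\scl_G(g^m)\le 1$, and by homogeneity $\scl_G(g)\le 1/m$.

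Next I would invoke the Calegari--Fujiwara spectral gap: there exists a constant $C=C(G)>0$ such that every $g\in G$ satisfies either $\scl_G(g)=0$ or $\scl_G(g)\ge C$. Setting $N:=\lceil 1/C\rceil+1$, whenever $g^m$ is a generalized torsion element with $m\ge N$ we obtain $\scl_G(g)\le 1/m<C$, which forces $\scl_G(g)=0$. The complementary Calegari--Fujiwara analysis of hyperbolic groups then shows that a non-trivial element of $G$ with vanishing $\scl_G$ must have a power conjugate to its inverse (torsion elements satisfy this trivially with $n$ equal to their order). For the converse direction, if $hg^{-n}h^{-1}=g^n$ then for every $m>0$ one computes $g^{mn}\cdot(g^{mn})^{h}=hg^{-mn}h^{-1}\cdot hg^{mn}h^{-1}=1$, so $g^{mn}$ is a generalized torsion element.

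For the ``in particular'' assertion, suppose $G$ is torsion-free word hyperbolic and $K\trianglelefteq G$ is a normal subgroup consisting of generalized torsion elements. Pick any non-trivial $h\in K$; since $G$ is torsion-free, every $h^m$ is non-trivial, lies in $K$, and is therefore a generalized torsion element. Taking $m\ge N$, the first part produces $n>0$ and $x\in G$ with $xh^{n}x^{-1}=h^{-n}$. Then $x^2$ centralizes $h^n$, and since centralizers in a torsion-free hyperbolic group are infinite cyclic, a short computation inside the maximal infinite cyclic subgroup containing $h^n$ together with the relation $xh^n x^{-1}=h^{-n}$ forces $x^2=1$; torsion-freeness then yields $x=1$, hence $h^{2n}=1$, a contradiction. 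The main step requiring care is locating the sharp scl-vanishing characterization in the Calegari--Fujiwara framework; everything else is elementary once the spectral gap is in hand.
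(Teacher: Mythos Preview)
Your argument is correct and follows the same architecture as the paper: bound $\scl_G$ of generalized torsion elements, invoke the Calegari--Fujiwara spectral gap, and for the torsion-free case rule out $g^n\sim g^{-n}$. The differences are that you derive the $\scl$ bound by hand (obtaining $\scl_G(h)\le 1$, versus the sharper $\scl_G(h)<\tfrac12$ the paper quotes from \cite{IMT}) and you sketch the centralizer argument for excluding $g^n\sim g^{-n}$ in a torsion-free hyperbolic group, whereas the paper simply cites \cite[Remark~3.2]{CF}. Your self-contained centralizer sketch implicitly uses uniqueness of roots (to pass from $xz^ax^{-1}=z^{-a}$ to $xzx^{-1}=z^{-1}$), which is standard for torsion-free hyperbolic groups; once that is granted the rest is as you say.
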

\begin{proof}
Let $g \in G$ be a generalized torsion element. 
Then its stable commutator length $\scl_G(g)$ satisfies $\scl_G(g)<\frac{1}{2}$ \cite[Theorem 2.4]{IMT}. On the other hand, if $G$ is a torsion-free hyperbolic group, there exists a constant $C>0$ (that only depends on the group $G$) such that $\scl_G(g)\geq C$, unless $g^{n}$ is conjugate to $g^{-n}$ for some $n>0$ \cite[Theorem A]{CF}. Since the stable commutator length $\scl_G(g)$ has the property that $\scl_G(g^{m})=m\, \scl_G(g)$ for every $g \in G$ and $k \in \Z$. These facts means that when $m > \frac{1}{2C}$ then $g^{m}$ cannot be a generalized torsion element.

The non-existence of normal subgroup consisting of generalized torsion elements follows from the fact that for a torsion-free word hyperbolic group $G$, $g^{n}$ is never conjugate to $g^{-n}$ (see \cite[Remark 3.2]{CF}).
\end{proof}

We remark that the argument presented here can be applied for many other classes of groups, as long as a \emph{spectral gap}, an appropriate lower bound of stable commutator length, is known (for example, 3-manifold groups \cite{ChH1})

\section{Generalized torsion group}
\label{sec:gt-group}

It is not hard to find a generalized torsion group which is not a torsion group.
In fact, Example \ref{example:gt-normal} says that suitable (infinite cyclic)-by-(generalized torsion) group is a generalized torsion group.  

\begin{example}
\label{exam:D}
The infinite dihedral group 
\[ D_{\infty} = \Z_2 \ast \Z_2=\{g,k \: | \: g^{2}=1, gkg^{-1}=k^{-1}\}\]
(which is an extension $1 \rightarrow \Z \rightarrow D_{\infty} \rightarrow \Z_2 \rightarrow 1$) is a generalized torsion group which is not a torsion group.
\end{example}

The arguments in the previous sections tell us the following property of generalized torsion groups.
\begin{theorem}
\label{theorem:closure}
The class of generalized torsion group $GT$ has the following properties.
\begin{itemize}
\item[(i)] $GT$ is closed under quotients.
\item[(ii)] $GT$ is closed under direct limits.
\item[(iii)] $GT$ is closed under extensions.
\item[(iv)] $GT$ is \emph{not} closed under subgroups.
\end{itemize}
\end{theorem}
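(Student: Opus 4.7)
For (i), I would take a normal subgroup $N \trianglelefteq G$ and a non-trivial $\bar g \in G/N$; lifting to $g \in G \setminus N$ makes $g$ non-trivial in $G$, so a generalized torsion relation $g^{x_1} \cdots g^{x_n} = 1$ exists in $G$ and projects to the required relation $\bar g^{\bar{x}_1} \cdots \bar g^{\bar{x}_n} = 1$ in $G/N$, with $\bar g \neq 1$.

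For (ii), let $G = \varinjlim G_i$ be a direct limit of groups in $GT$ with structure maps $\phi_i \colon G_i \to G$; given $1 \neq g \in G$, any representative $g_i \in G_i$ with $\phi_i(g_i)=g$ is non-trivial in $G_i$ (otherwise $g = \phi_i(1) = 1$), so a generalized torsion relation for $g_i$ exists in $G_i$ and its image under $\phi_i$ is a generalized torsion relation for $g$ in $G$.

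For (iii), given an exact sequence $1 \to K \to G \xrightarrow{p} Q \to 1$ with $K, Q \in GT$, I would apply Theorem \ref{theorem:GT-group}: its hypothesis is automatic, since a generalized torsion relation for $1 \neq k \in K$ valid in $K$ is also valid in $G$. For $g \in G \setminus \{1\}$ with $p(g)=1$, $g \in K \setminus \{1\}$ is a generalized torsion element and we are done; for $p(g) \neq 1$ a genuine generalized torsion element of $Q$, Theorem \ref{theorem:GT-group} applies verbatim. The remaining sub-case where $p(g)$ is a (nontrivial) torsion element of $Q$ must be handled separately: then $p(g)^n=1$ yields $g^n \in K$, and either $g^n=1$ (so $g$ is torsion, hence a generalized torsion element) or $g^n$ is a generalized torsion element of $G$ by $K \in GT$, in which case a relation $(g^n)^{z_1} \cdots (g^n)^{z_m} = 1$ expands via $(g^n)^z = (g^z)^n$ into a generalized torsion relation for $g$.

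Finally, for (iv), I would exhibit $D_\infty \in GT$ from Example \ref{exam:D} together with its index-two infinite cyclic subgroup $\langle k \rangle \cong \Z$; since $\Z$ is torsion-free abelian, it is an $R^*$-group and therefore contains no non-trivial generalized torsion element, so $\langle k \rangle \notin GT$. The only step requiring genuine care is the torsion sub-case of (iii), where Theorem \ref{theorem:GT-group} does not apply off-the-shelf; once the substitution $(g^n)^z = (g^z)^n$ is invoked, the rest is purely formal.
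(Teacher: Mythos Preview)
Your proof is correct and matches the paper's approach: parts (i) and (ii) are unwound from the definitions, part (iii) is deduced from Theorem~\ref{theorem:GT-group}, and part (iv) from Example~\ref{exam:D}. Your extra care in the torsion sub-case of (iii) is well placed---the word ``genuine'' in the statement of Theorem~\ref{theorem:GT-group} formally excludes that case, but the proof of its ``if'' direction never uses genuineness, so the paper's one-line appeal to that theorem is in fact justified and your separate argument is a special instance of it.
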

\begin{proof}
(i,ii) are immediate from the definitions. (iii) follows from Theorem \ref{theorem:GT-group}. (iv) is immediate consequence of Example \ref{exam:D}.
\end{proof}

%Although Theorem \ref{theorem:GT-group} allows us to find or construct many generalized torsion groups having non-torsion elements, at this time of writing, we have no examples of \emph{torsion-free} generalized torsion groups.
%
%\begin{question}
%\label{ques:torsion-free-gt}
%Is there a torsion-free generalized torsion group ?
%\end{question}

Aforementioned result of stable commutator length of generalized torsion elements \cite[Theorem 2.4]{IMT} implies the following useful property of generalized torsion group. 

\begin{proposition}
\label{prop:scl=0}
If $G$ is a generalized torsion group, then $scl_G(g)=0$ for all element $g \in G$.
\end{proposition}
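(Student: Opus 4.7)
The plan is to exploit the homogeneity of $\scl_G$ together with the universal upper bound $\scl_G(h)<\frac{1}{2}$ that \cite[Theorem 2.4]{IMT} places on every generalized torsion element $h$. The strategy is to apply this bound not to $g$ itself but to arbitrarily high powers $g^{n}$, and then let $n$ tend to infinity.

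Concretely, I would fix $g \in G$ and split into cases according to the order of $g$. If $g=1$ or $g$ has finite order, then some power of $g$ is trivial and lies in $[G,G]$, so the extended definition together with homogeneity gives $\scl_G(g)=0$ immediately. The interesting case is when $g$ has infinite order. Then for every integer $n\geq 1$ the power $g^{n}$ is non-trivial, and since $G$ is assumed to be a generalized torsion group, $g^{n}$ is itself a generalized torsion element of $G$. The defining relation $(g^{n})^{x_1}\cdots (g^{n})^{x_k}=1$ certifies that a power of $g$ lies in $[G,G]$, so $\scl_G(g)$ is finite and homogeneity $\scl_G(g^{n})=n\,\scl_G(g)$ applies.

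Applying \cite[Theorem 2.4]{IMT} to each $g^{n}$ gives
\[
n\,\scl_G(g)=\scl_G(g^{n})<\tfrac{1}{2}
\]
for every $n\geq 1$. Letting $n\to\infty$ forces $\scl_G(g)=0$, which completes the proof.

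There is no real obstacle: the one thing to watch is ensuring that the extended $\scl_G$ is finite on $g$ (so homogeneity is meaningful and not an identity of the form $\infty = n\cdot \infty$), but this is automatic from the generalized torsion relation itself, which exhibits a power of $g$ in the commutator subgroup.
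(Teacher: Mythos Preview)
Your argument is correct and is precisely the intended one: the paper does not spell out a proof of this proposition, simply recording that it follows from \cite[Theorem 2.4]{IMT}, and the homogeneity-plus-powers trick you give is exactly how that implication works. Your remark that the generalized torsion relation forces $g^{k}\in[G,G]$ (by abelianizing) so that $\scl_G(g)$ is finite is the right justification for applying homogeneity.
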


A typical classes of groups with trivial stable commutator length is \emph{amenable} groups. Due to a similarity of the definition of elementary amenable groups and properties in Theorem \ref{theorem:closure}, one may expect generalized torsion group is (elementary) amenable. However, there is a finitely generated torsion group which is not amenable (such as, Tarski monster \cite{Ol}) so a (generalized) torsion group is not necessarily amenable.

As we have seen in Example \ref{exam:D}, the infinite dihedral group $D_{\infty}=\Z_2 \ast \Z_2$ is generalized torsion group. Theorem \ref{theorem:gt} says that $D_{\infty}$ is the unique example of finitely generated decomposable generalized torsion group.
% so this provides a supporting evidence for Question \ref{ques:NF}.

\begin{proof}[Proof of Theorem \ref{theorem:gt}]
By \cite[Theorem 3.1]{Ch}, if $G=A\ast B$ and $g = a_1b_1\cdots a_Lb_L$ ($1\neq a_i \in A$, $1\neq b_i \in B$), then $scl(g) \geq \frac{1}{2}-\frac{1}{N}$, where
\[ N= \min \{ord(a_1),\ldots,ord(a_L), ord(b_1),\ldots, ord(b_L)\} \]
(Here $ord(x)$ represents the order of $x$, and if $x$ is non-torsion $ord(x)=\infty$).
Therefore by Proposition \ref{prop:scl=0}, if $G$ is a generalized torsion group,  both $A$ and $B$ are torsion group of exponent two (i.e. every non-trivial element has order two). 
Since we assumed that $G=A \ast B$ is finitely generated, so are $A$ and $B$. 
Every torsion group $X$ of exponent two is abelian because for $x,y \in X$, $xyx^{-1}y^{-1}=xyxy=(xy)^{2}=1$. Therefore $A\cong \Z_2^{a}$ and $B \cong \Z_2^{b}$ for some $a,b\geq 1$.

However, when $a+b>1$, the free product $G= \Z_2^a \ast \Z_2^b$ (which is a right-angled Coxeter group) has an element with non-trivial stable commutator length by \cite[Corollary 6.18]{ChH2}. Thus $G$ cannot be a generalized torsion group by Proposition \ref{prop:scl=0}.
\end{proof}

In the first version of the paper, the author asked whether there exists a torsion-free generalized torsion groups, and whether every generalized torsion group has the property NF (having no non-abelian free groups).

R. Coulon and A. Yago taught the author that theorem of Osin \cite{Os} answers both questions in the negative. Osin showed that every countable torsion-free group $H$ (so, the non-abelian free group) can be embedded into a torsion-free, 2-generated group $G$ having exactly two conjugacy classes \cite[Corollary 1.3]{Os}. Such a group $G$ is a generalized torsion group because every non-trivial element $g \in G$ is conjugate to $g^{-1}$. 

On the other hand, when we assume $G$ is finitely presented, the problems remain to be interesting.
\begin{question}
\label{ques:torsion-free-fp-gt}
{$ $}
\begin{itemize}
\item[(i)] Is there a torsion-free, finitely presented generalized torsion group ?
\item[(ii)] Does every finitely presented generalized torsion group have the property NF?
\end{itemize}
\end{question}

\end{document}